\documentclass[12pt]{exam}
\usepackage{amsfonts}
\usepackage{amssymb,amsmath}
\usepackage{latexsym}
\usepackage{amsthm,amssymb}
\usepackage{amscd}
\usepackage{color}
\usepackage{ulem}
\usepackage{enumerate}
\usepackage{graphicx} 
\pagestyle{empty} \setlength{\oddsidemargin}{0in}
\setlength{\evensidemargin}{.5in} \setlength{\footskip}{0in}
\setlength{\topmargin}{-.9in} \setlength{\marginparwidth}{0in}
\setlength{\marginparsep}{0in} \setlength{\textwidth}{6.5in}
\setlength{\textheight}{10in}

\newtheorem{thm}[equation]{Theorem}

\newtheorem{lem}[equation]{Lemma}

\theoremstyle{definition}
\newtheorem{defn}[equation]{Definition}

\numberwithin{equation}{section}

\begin{document}

\title{Skewable matrices over  $\wedge^2 V$  applied to locally metric connections}

\author{Mihail Cocos \& Kent Kidman}

\maketitle
\begin{abstract}
	A necessary condition for a connection in a vector bundle to be locally metric is for its curvature matrix, which consists of $2$ forms, to be skew symmetric with respect to some local frame. In this paper we give a simple algorithm that can be used to decide when a matrix of $2$  forms is equivalent to a skew symmetric matrix.  We apply this algorithm to verify whether a "{\it full rank}" curvature connection is locally metric.
\end{abstract}

\section{Introduction}
The problem of finding metrics compatible with a connection in a vector bundle has been addressed by some mathematical physicists and mathematicians. See for example (\cite{MC},\cite{VZ1},\cite{VZ2},\cite{T}).\\ This is a problem motivated by gauge theory in physics and also interesting as a pure mathematical question. We will now briefly describe the differential geometry problem and abstract from it the algebra problem our paper is concerned with.\\
Let $E$ be a vector bundle of rank $m$ above the smooth $n$ dimensional manifold $M,$  and $D$ be  a connection in $E.$ It is a standard fact in differential geometry that if the connection $D$ is locally metric (see for example \cite{FT}) then there exist a local frame $$\sigma=(\sigma_1,\sigma_2, \cdots, \sigma_m)$$ in $E$ such that the curvature matrix of $D$ with respect to $\sigma$ is skew symmetric. It has been shown in (\cite{MC}) that the skew symmetry of the curvature matrix with respect to some frame is, in the two dimensional case, also sufficient for the local metrizability of the connection.  The curvature matrix $\Omega$ consists of differential $2$ forms defined on $M$ and hence at a point  $p \in M$ the curvature has elements of  $\wedge^2 T_pM=T_pM \wedge T_pM$ as its entries. The problem then is to be able to find out if there is a frame in which the curvature matrix is skew symmetric. Here we have to remind the reader that if $\sigma'=(\sigma_1',\sigma_2', \cdots, \sigma_m')$ is a different local frame of the bundle $E$ then the curvature matrix with respect to this new frame is related to the curvature matrix $\Omega$ by the equation 
\newpage
\begin{equation}
\label{curvtrnsf}\Omega'=U^{-1}\Omega U 
\end{equation}
where $U$ is the matrix with scalar entries defined by 
\begin{equation}
\label{framechange}
\sigma'=\sigma U.
\end{equation}

The equation (\ref{curvtrnsf}) is essential since in the case of a connection $D$ one can hardly expect its curvature matrix to be skew symmetric with respect to an arbitrary frame $\sigma'.$ Therefore starting with an arbitrary local frame $\sigma'$ we would like to be able find the scalar matrix $U$ as in (\ref{framechange}) that will generate our desired frame $\sigma.$ It is therefore necessary for us to address the pure algebraic question of weather a given matrix  $\Omega'$ with entries $2$ forms is equivalent  as in (\ref{curvtrnsf}) with a skew symmetric matrix $\Omega.$ Next section will clarify the algebra problem.

\section{The matrix problem}

\noindent Let $V$ be an $n$-dimensional real vector space and  $e=(e_1,e_2,..., e_n)$ a frame in $V$,  and let  $\wedge^2 V=V \wedge V$ denote the vector space of wedge products of vectors in $V$.

\begin{defn} 

An $n \times n$ matrix $\Omega$ with entries in  $\wedge^2 V$ is {\bf skewable} is there is a non-singular $n \times n$ real matrix $U$ such that $(U^{-1} \Omega U)^T=-U^{-1} \Omega U$, i.e. if $U^{-1} \Omega U$ is skew-symmetric.

\end{defn}

\begin{defn}

Let $S=\{S_1,S_2,...,S_m\}$ be a set of $n \times n$ real matrices.  Then $S$ is \underline{simultaneously skewable} if there exist a nonsingular real $n \times n$ matrix $U$ such that for each $1 \le k \le m$, $U^{-1}S_kU$ is skew symmetric.

\end{defn}

\begin{lem}
Let $S=\{S_1,S_2,...,S_m\}$ be a set of $n \times n$ real matrices. Suppose $$\Omega = w_1 S_1 + w_2 S_2 +...+w_m S_m,$$\ with  $w_k \in \wedge^2 V$ and for which the set $\{w_1, w_2, ..., w_m\}$ is linearly independent. Then $\Omega$ is skewable if and only if $S$ is simultaneously skewable.

\end{lem}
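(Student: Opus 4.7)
The plan is to prove both directions by direct computation, using linear independence of the $w_k$ as the sole nontrivial input.

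For the easy direction (simultaneous skewability implies skewability of $\Omega$), I would fix a nonsingular real matrix $U$ that simultaneously skews all the $S_k$ and compute entrywise. Since $U$ has scalar entries and the wedge product is bilinear, conjugation commutes with scalar linear combinations in the sense that $U^{-1}\Omega U = \sum_{k=1}^m w_k\,(U^{-1}S_k U)$. Taking the transpose entry by entry and using that each $U^{-1}S_k U$ is skew-symmetric then gives $(U^{-1}\Omega U)^T = -U^{-1}\Omega U$.

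For the converse, I would suppose $U$ skews $\Omega$ and examine a generic entry. By the same expansion as above, the $(i,j)$ entry of $U^{-1}\Omega U + (U^{-1}\Omega U)^T$ equals
\begin{equation*}
\sum_{k=1}^m w_k \bigl[(U^{-1}S_k U)_{ij} + (U^{-1}S_k U)_{ji}\bigr] = 0.
\end{equation*}
Here the bracketed quantities are real scalars while the $w_k$ lie in $\wedge^2 V$. Linear independence of $\{w_1,\ldots,w_m\}$ in $\wedge^2 V$ then forces every scalar coefficient to vanish, i.e.\ $(U^{-1}S_k U)_{ij} = -(U^{-1}S_k U)_{ji}$ for all $i,j$ and all $k$. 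Hence each $U^{-1}S_k U$ is skew-symmetric, so $S$ is simultaneously skewable by the same $U$.

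There is no real obstacle here; the only subtle point is keeping straight that $U$ has scalar entries (so it passes through the bilinear wedge product without issue) and that linear independence must be applied in the full space $\wedge^2 V$, not in $V$. The proof is essentially one calculation in each direction, and the same matrix $U$ witnesses both properties.
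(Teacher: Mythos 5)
Your proof is correct and follows essentially the same route as the paper: expand $U^{-1}\Omega U = \sum_k w_k (U^{-1}S_k U)$ and use linear independence of the $w_k$ in $\wedge^2 V$ to force each scalar coefficient matrix to be skew-symmetric. The only cosmetic difference is that you handle the diagonal and off-diagonal entries in one uniform computation via $U^{-1}\Omega U + (U^{-1}\Omega U)^T$, whereas the paper treats the $(i,i)$ and $(i,j)$, $i \ne j$, cases separately.
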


\proof  Suppose $S$ is simultaneously skewable, and that $U^{-1} S_k U$ is skew symmetric for ${1 \le k \le m}$, for a nonsingular real $U$.  Let $\Omega = w_1 S_1 + w_2 S_2 +...+w_m S_m$,\  $w_k \in V^{\wedge 2}$, where the set $\{w_1, w_2, ..., w_m\}$ is linearly independent.  Consider

\quad $U^{-1} \Omega U = U^{-1}( \sum_{k=1} ^m w_k S_k) U = \sum_{k=1}^m w_k U^{-1} S_k U$.

\noindent Each matrix in the sum is skew symmetric, so $U^{-1} \Omega U$ is skew symmetric as well.

Now suppose there is a nonsingular $U$ such that $U^{-1} \Omega U$ is skew symmetric, and that $\Omega = w_1 S_1 + w_2 S_2 +...+w_m S_m$,\   $w_k \in \wedge^2 V$, where the set $\{w_1, w_2, ..., w_m\}$ is linearly independent.  Then

\begin{equation}\label{*}U^{-1} \Omega U = \sum_{k=1}^m w_k U^{-1} S_k U \end{equation}

\noindent Since the left hand side of $(\ref{*})$ is skew symmetric, the right hand side is as well.  Consider the $(i,i)$ element of the left hand side of $(\ref{*})$.  This must be zero so

\quad $0=\sum_{k=1}^m w_k U^{-1} S_k U(i,i)$

\noindent and since the $w_k's$ are linearly independent, this means that $U^{-1} S_k U (i,i) = 0$ for all $i$.

\noindent  Now consider the $(i,j)$ element of the left hand side of $(\ref{*})$ where $i \ne j$.  This is equal to the negative of the $(j,i)$ element.  Hence

\quad $\sum_{k=1}^m w_k U^{-1} S_k U(i,j) = -\sum_{k=1}^m w_k U^{-1} S_k U(j,i)$

\noindent so, after rearranging, we have

\quad $\sum_{k=1}^m w_k ( U^{-1} S_k U(i,j) + U^{-1} S_k U(j,i)) = 0$,

\noindent and since the $w_k's$ are linearly independent, this means that $(U^{-1} S_k U(i,j) + U^{-1} S_k U(j,i) = 0$ for all $i \ne j$.  Putting these two things together says that each matrix $ U^{-1} S_k U$ is skew symmetric for each $k$, and that the set $S$ is simultaneously skewable.    $\Box$

\vspace{.5in}

Let $\Omega$ be an $n \times n$ matrix with entries in $\wedge^2 V$, and let $e=(e_1,e_2,..., e_n)$ be a frame in $V$.  Then 

\begin{equation} \Omega = \sum_{w_k} w_k S_k, \,\  k=1,2,...,\binom{n}{2} \end{equation}

\noindent where $\{w_k, k=1,2,...,n\}=\{e_i \wedge e_j, 1 \le i < j \le n\}$  is the standard basis for $\wedge^2 V$, and the $S_k$'s have real entries.  So we have

\begin{thm}

$\Omega$ as defined above, is skewable if and only if the matrices $\{S_k\}$ are simultaneously skewable.

\end{thm}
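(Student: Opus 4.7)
The plan is to observe that this theorem is essentially a direct corollary of the preceding lemma. The decomposition $\Omega = \sum_k w_k S_k$ with $\{w_k\} = \{e_i \wedge e_j : 1 \le i < j \le n\}$ uses the standard basis of $\wedge^2 V$, and a standard fact from multilinear algebra is that this set is a basis of $\wedge^2 V$, hence in particular linearly independent. The matrices $S_k$ with real entries are then uniquely determined: expanding every entry $\Omega_{ij}$ of $\Omega$ in the basis $\{e_p \wedge e_q : p<q\}$ produces a real scalar at each position $(i,j)$ for each basis element, and collecting these scalars into an $n \times n$ matrix gives the corresponding $S_k$.

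With the linear independence of $\{w_k\}$ in hand, the hypotheses of the preceding lemma are satisfied verbatim. Applying the lemma with this particular choice of $\{w_k\}$ yields the equivalence: $\Omega$ is skewable if and only if the family $\{S_k\}$ is simultaneously skewable.

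There is no genuine obstacle here; the only thing to verify is that the standard wedge basis is indeed linearly independent, which is immediate. One might briefly remark that the equivalence does not depend on the choice of frame $e$ in $V$: a different frame $e'$ would produce a different set $\{S_k'\}$, but since the transition between the two bases of $\wedge^2 V$ is an invertible real linear map, the simultaneous skewability of $\{S_k'\}$ is equivalent to that of $\{S_k\}$ (because simultaneous skewability of a family of real matrices depends only on their real linear span together with the requirement that a single conjugating $U$ skews all of them, which is preserved under real invertible changes of coefficients). Hence the criterion is intrinsic to $\Omega$, as claimed.
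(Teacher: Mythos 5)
Your proposal is correct and matches the paper exactly: the paper states this theorem as an immediate consequence of the preceding lemma, since the standard basis $\{e_i \wedge e_j : 1 \le i < j \le n\}$ of $\wedge^2 V$ is linearly independent, which is precisely your argument. Your additional remark on frame-independence is sound (simultaneous skewability by a fixed $U$ passes to the real linear span) but goes beyond what the paper records.
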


\vspace{10pt}





 Next we will give a simple algorithm that can be used to decide whether a set of matrices with real entries are simulataneoulsy skewable.

\begin{lem}\label{algorithm}
	Let $S=\{S_1,S_2,...,S_m\}$ be a set of $n \times n$ real matrices. Then $S$ is simultaneously skewable if and only if there exist a positive definite symmetric matrix $A$ such that
	
	\begin{equation} \label{system} S_iA+AS_i^T=0, \  \forall i.\end{equation}

\noindent If the solution to the system exist, then there is always a symmetric matrix $U$ such that 
		
	$$ U^2=A.$$ The matrix $U$ will skew-symmetrize the set $S.$
\end{lem}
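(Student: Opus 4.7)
The plan is to connect the conjugation matrix $U$ that skew-symmetrizes $S$ to the matrix $A$ in (\ref{system}) via the substitution $A = UU^T$. Both implications then reduce to short transpose manipulations, and the final claim about a symmetric square root falls out of the reverse direction.

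For the ($\Rightarrow$) direction, I would start from the simultaneous skew-symmetry condition $(U^{-1} S_i U)^T = -U^{-1} S_i U$, rewrite its left-hand side as $U^T S_i^T (U^T)^{-1}$, and clear the inverses by multiplying on the left by $U$ and on the right by $U^T$. This produces $S_i (UU^T) + (UU^T) S_i^T = 0$. Setting $A := UU^T$, the matrix $A$ is automatically symmetric, and it is positive definite because $x^T A x = \|U^T x\|^2 > 0$ for $x\ne 0$ (using that $U$ is nonsingular). Hence $A$ is a positive definite symmetric solution of (\ref{system}), as required.

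For the ($\Leftarrow$) direction, I would assume $A$ is a positive definite symmetric solution of (\ref{system}) and invoke the spectral theorem to obtain the unique symmetric positive definite square root $U$ with $U^2 = A$ and $U = U^T$. Substituting $A = U^2$ into $S_i A + A S_i^T = 0$ gives $S_i U^2 + U^2 S_i^T = 0$, and multiplying on both sides by $U^{-1}$ yields $U^{-1} S_i U + U S_i^T U^{-1} = 0$. Because $U$ is symmetric, $U S_i^T U^{-1} = (U^{-1} S_i U)^T$, so each $U^{-1} S_i U$ is skew-symmetric. Since the same $U$ works uniformly in $i$, the set $S$ is simultaneously skewable, and this same argument verifies the final sentence of the lemma.

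There is no real obstacle here; the essential step is simply noticing the substitution $A = UU^T$, after which both directions are immediate from taking transposes. The only background fact needed is the existence of a unique positive definite symmetric square root of a positive definite symmetric matrix, which is standard from the spectral decomposition.
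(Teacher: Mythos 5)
Your proof is correct and follows essentially the same route as the paper: the forward direction via the substitution $A = UU^T$, and the converse via the symmetric positive definite square root $U = \sqrt{A}$ from the spectral theorem, with the same transpose manipulations (the paper merely factors the identity $S_iU^2 + U^2S_i^T = U\left(U^{-1}S_iU + (U^{-1}S_iU)^T\right)U^T$ where you multiply through by $U^{-1}$, a cosmetic difference).
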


\proof Suppose that $U$ is a nonsingular real matrix such that for each $1 \le i \le m$, $U^{-1}S_i U$ is skew symmetric.  Define $A=UU^{T}$.  Clearly $A$ is symmetric and positive definite.  Let $B= S_iA+AS_i^T = S_iUU^T + UU^TS_i^T$.  Then we have

\qquad $U^{-1}BU = U^{-1}S_iUU^TU + U^TS_i^TU$

\qquad\qquad $\ \ \ \ \ = (U^{-1}S_iU)U^TU + (S_iU)^TU$

\qquad\qquad $\ \ \ \ \ = (U^{-1}S_iU)U^TU + (UU^{-1}S_iU)^TU$

\qquad\qquad $\ \ \ \ \ =  (U^{-1}S_iU)U^TU + (U^{-1}S_iU)^TU^TU$

\qquad\qquad $\ \ \ \ \ = (U^{-1}S_iU + (U^{-1}S_iU)^T)U^TU$

\qquad\qquad $\ \ \ \ \ = 0$.

\noindent Hence $B=0$ and the first part is proved.

Conversely, suppose there is a positive definite symmetric matrix $A$ such that $S_iA+AS_i^T=0$ for each $i$.  Since $A$ is p.d.s., there is a orthogonal matrix $V$ such that 

\qquad\qquad $V^TAV = diag(\lambda_1, ... ,\lambda_n)$

\noindent with all the $\lambda_j$'s positive.  Then 

\qquad\qquad $A=V diag(\sqrt{\lambda_1}, ... ,\sqrt{\lambda_n})V^TV diag(\sqrt{\lambda_1}, ... ,\sqrt{\lambda_n})V^T$,

\noindent i.e. $A$ has a square root.  Define $U=\sqrt{A} = V diag(\sqrt{\lambda_1}, ... ,\sqrt{\lambda_n})V^T$.  Note that $U$ is symmetric.

\noindent We have the following:

\qquad \qquad $0=S_iA+AS_i^T = S_iU^2+U^2S_i^T$

\qquad \qquad $\ \  = S_iUU^T +UU^TS_i^T$

\qquad \qquad $\ \  = S_iUU^T +U(S_iU)^T$

\qquad \qquad $\ \  = UU^{-1}S_iUU^T +U(UU^{-1}S_iU)^T$

\qquad \qquad $\ \  = UU^{-1}S_iUU^T +U(U^{-1}S_iU)^TU^T$

\qquad \qquad $\ \  = U(U^{-1}S_iU +(U^{-1}S_iU)^T)U^T$.

\noindent Hence we have $U^{-1}S_iU +(U^{-1}S_iU)^T=0$, since $U$ is nonsingular. So $U^{-1}S_iU +U^{-1}S_iU)^T$ is skew symmetric for each $i$ and the set $S$ is simultaneously skewable.  $\Box$

\vspace{10pt}

Please note that the equations (\ref{system}) define an OVERDETERMINED( in general!) system of linear equations with the entries of the matrix $A$ as unknowns.

\section{Aplication to curvature matrices}

Let $V$ be $3$ dimensional real vector field and let $e^1,e^2,e^3$ be a frame in $V.$ Consider the "curvature" matrix 

$$ \Omega =\begin{Vmatrix}
-e^1 \wedge e^2-e^3 \wedge e^1 & -2 e^1 \wedge e^2- e^2 \wedge e^3- e^3 \wedge e^1 & -e^1\wedge e^2-2e^2\wedge e^3+e^3 \wedge e^1 \\ e^1\wedge e^2+e^3 \wedge e^1 & e^1\wedge e^2+e^2\wedge e^3+e^3 \wedge e^1 & 2e^2\wedge e^3 \\  -e^3\wedge e^1 & -e^2\wedge e^3- e^3 \wedge e^1 & -e^2 \wedge e^3
\end{Vmatrix}$$\\

\bigskip

It follows that $$S_1=\begin{Vmatrix}
-1 & -2 & -1\\ 1 & 1 & 0\\ 0 & 0 & 0
\end{Vmatrix} \, ,S_2=\begin{Vmatrix}
0 & -1 & -2\\ 0 & 1 & 2\\ 0 & -1 & -1
\end{Vmatrix}\, ,S_3=\begin{Vmatrix}
	-1 & -1 & 1\\ 1 & 1 & 0\\ -1 & -1 & 0 
\end{Vmatrix}$$

Using Mathematica, we find one solution to the system  $ S_iX+XS_i^T=0$, for $i=1,2,3$, is

\bigskip

\qquad \qquad $A = \begin{Vmatrix}
3 & -2 & 1\\ -2 & 2 & -1\\ 1 & -1 & 1
\end{Vmatrix}$.

\bigskip

\noindent $A$ is positive definite symmetric, so we use Mathematica again to find the square root of $A$, which is:

\bigskip

\qquad \qquad $U =  \begin{Vmatrix}
1.56022 & -0.689101 & 0.301417\\ -0.689101 & 1.17254 & -0.387684\\ 0.301417 & -0.387684 & 0.871119
\end{Vmatrix}$.

\bigskip

\noindent Note that we have rounded off for readability.  This leads to round off errors.  To get the results below, use more signigicant digits.

\noindent Then we see (counting for round off) the following:

\bigskip

\qquad $U^{-1}S_1U =  \begin{Vmatrix}
0 & -0.87112 & -0.387685\\ 0.87112 & 0 & -0.301416\\ 0.387685 & 0.301416 & 0
\end{Vmatrix}$

\bigskip

\qquad $U^{-1}S_2U =  \begin{Vmatrix}
0 & -0.086268 & -0.483435\\ 0.086268 & 0 & 0.871119\\ 0.483435 &-0.871119 & 0
\end{Vmatrix}$

\bigskip

\qquad $U^{-1}S_3U =  \begin{Vmatrix}
0 & -0.483435 & 0.784851\\ 0.483435 & 0 & 0.387685\\ -0.784851 &-0.387685 & 0
\end{Vmatrix}$

\bigskip

\noindent These are all skew-symmetric, which means the matrix $U$ skewsymmetrizes the set $\{S_1,S_2,S_3\}$, demonstrating the result.

\bigskip
 
 From the point of view of differential geometry, it is important to see weather two matrices that skewsymmetrize the same set of matrices are related. More precisely we will prove the following result 
 \begin{thm}\label{conjecture}
 	Let $U,V$ be two matrices with equal determinant that skewsymmetrize $\scriptstyle\binom{n}{2}$ linearly independent matrices $S_1,S_2,...,S_{\scriptstyle\binom{n}{2}}$  , i.e. $U^{-1}S_iU$ and $V^{-1}S_iV$ are skew-symmetric for each $i$. Then $$U=VO,$$ with $O$ orthogonal matrix.
 \end{thm}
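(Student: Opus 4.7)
The strategy is to set $W := V^{-1}U$ and prove that $W$ is orthogonal; the conclusion $U = VW$ is then immediate. First I would observe that conjugation by $U$ is a linear isomorphism of $M_n(\mathbb{R})$, so the matrices $T_i := U^{-1}S_iU$ form $\binom{n}{2}$ linearly independent skew-symmetric matrices, i.e., a basis of the space of skew-symmetric $n\times n$ matrices. Since $W^{-1}T_iW = V^{-1}S_iV$ is skew-symmetric for each $i$, linearity then gives that $W^{-1}XW$ is skew-symmetric for \emph{every} skew-symmetric matrix $X$.

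Next, I would unpack this condition algebraically. Writing $(W^{-1}XW)^T = -W^{-1}XW$ and using $X^T = -X$ leads to $W^T X (W^T)^{-1} = W^{-1}XW$; after multiplying on the left by $W$ and on the right by $W^T$ this rearranges to $(WW^T)X = X(WW^T)$ for every skew-symmetric $X$. Thus the symmetric positive-definite matrix $A := WW^T$ lies in the centralizer of the full space of skew-symmetric matrices.

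The key algebraic step is then to show $A = \lambda I$ for some $\lambda > 0$. For $n\ge 3$, a short computation with the elementary generators $X_{ij} = E_{ij}-E_{ji}$ (for instance, $X_{12}X_{13} = E_{23}$ and $X_{ij}^2 = -(E_{ii}+E_{jj})$) shows that they already generate $M_n(\mathbb{R})$ as an associative algebra, so any matrix commuting with all of them must be scalar. For $n = 2$, the space is spanned by $J = E_{12}-E_{21}$, and a direct check shows that the only \emph{symmetric} matrices commuting with $J$ are scalar multiples of $I$. This centralizer step is where I expect the only mild obstacle, since it requires a small case distinction in $n$.

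Finally, $WW^T = \lambda I$ gives $(\det W)^2 = \lambda^n$, while the hypothesis $\det U = \det V$ (together with nonsingularity of $V$) forces $\det W = \det U/\det V = 1$. Hence $\lambda^n = 1$, and since $\lambda > 0$ we conclude $\lambda = 1$. Therefore $WW^T = I$, i.e., $W$ is orthogonal, and $U = VW$ with $O := W$ the desired orthogonal matrix.
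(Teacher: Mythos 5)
Your proof is correct, and its core runs along a genuinely different line from the paper's. The opening reduction is the same in both: conjugation by $U$ and by $V$ each carry the span of the $S_i$ isomorphically onto the full $\binom{n}{2}$-dimensional space $M_n^s$ of skew-symmetric matrices, so conjugation by $V^{-1}U$ preserves that space. (Two cosmetic slips here: with $W=V^{-1}U$ the correct identity is $WT_iW^{-1}=V^{-1}S_iV$ rather than $W^{-1}T_iW$ --- harmless, since conjugation by $W$ preserves $M_n^s$ if and only if conjugation by $W^{-1}$ does, the map being a linear automorphism of that space; and $X_{12}X_{13}=-E_{23}$, not $E_{23}$, which does not affect your generation claim.) From that point the paper proves its key Lemma~\ref{conjlemma} via the singular value decomposition $A=PDL$: it transfers the skew-preserving property to the diagonal factor $D$ by a contradiction argument with the orthogonal factors, tests conjugation by $D$ on the basis of two-entry skew matrices to force all singular values equal, and uses the determinant normalization to conclude $D=I$, so $A=PL$ is orthogonal. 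You instead extract the symmetric positive-definite matrix $WW^T$, show it centralizes all of $M_n^s$, and prove this centralizer consists of scalar matrices --- by an algebra-generation computation for $n\ge 3$ and a direct check for $n=2$ --- after which the determinant hypothesis pins the scalar down via $\lambda^n=(\det W)^2=1$. In effect you replace the paper's SVD with a polar-decomposition-style argument plus a commutant computation: your route avoids diagonalization and the paper's contradiction step, and it isolates exactly where the determinant hypothesis enters (once, at the very end), at the cost of a case split at $n=2$ (which the paper's entrywise computation handles uniformly) and the explicit verification that the elementary skew matrices generate $M_n(\mathbb{R})$ as an algebra.
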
 
 In order to prove Theorem \ref{conjecture} we need to establish the following lemma
 
 \begin{lem}\label{conjlemma} Let $M_n^s$ denote the $\scriptstyle\binom{n}{2}$ dimensional space of $n \times n$ skew-symmetric matrices and let $A$ be a matrix with determinant equal to one. If for all $X \in M_n^s$ $$A^{-1}XA \in M_n^s,$$ then $A$ is an orthogonal matrix.

\end{lem}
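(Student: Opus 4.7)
The plan is to translate the hypothesis into a commutation condition on $AA^T$, show that a symmetric matrix commuting with every skew-symmetric matrix must be scalar, and finally use $\det A = 1$ to pin down the scalar.

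First, I would unpack the hypothesis algebraically. For $X \in M_n^s$, the assumption $(A^{-1}XA)^T = -A^{-1}XA$ expands, using $X^T = -X$, to
\begin{equation*}
A^T X A^{-T} = A^{-1} X A.
\end{equation*}
Multiplying on the left by $A$ and on the right by $A^T$ then yields
\begin{equation*}
(AA^T)\, X = X\, (AA^T) \quad \text{for every } X \in M_n^s.
\end{equation*}
So the whole content of the hypothesis is that $B := AA^T$ commutes with every skew-symmetric matrix.

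The next step, which is the crux of the argument, is to prove that any symmetric matrix commuting with all of $M_n^s$ must be a scalar multiple of the identity. I would do this by plugging in the standard basis $E_{ij} - E_{ji}$ of $M_n^s$ for $i < j$ and reading off constraints on the entries of $B$: the off-diagonal commutators force the off-diagonal entries of $B$ to vanish (combined with the symmetry $B = B^T$), and then the mixing produced by $E_{ij}-E_{ji}$ between the $i$-th and $j$-th coordinate directions forces $B_{ii} = B_{jj}$ for every pair $i,j$. (A cleaner phrasing is Schur's lemma applied to the irreducible standard representation of $\mathfrak{so}(n)$; for $n=2$ Schur only yields $B \in \mathbb{R} I + \mathbb{R} J$, and symmetry of $B$ then kills the $J$ component.) Either way one concludes $B = \lambda I$ with $\lambda > 0$, since $B = AA^T$ is positive definite.

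Finally, taking determinants in $AA^T = \lambda I$ gives $\lambda^n = (\det A)^2 = 1$, hence $\lambda = 1$ and $AA^T = I$, so $A$ is orthogonal. The only delicate point I anticipate is the small-$n$ case $n=2$, where the commutant of $\mathfrak{so}(n)$ in $M_n(\mathbb{R})$ is not reduced to scalars; the symmetry of $B = AA^T$ is essential there, so I would either handle $n=2$ by direct computation or make sure the elementary-basis argument is written in a way that uses $B = B^T$ explicitly.
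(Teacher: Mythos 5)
Your proposal is correct, and it takes a genuinely different route from the paper. The paper proves the lemma via the singular value decomposition: it writes $A=PDL$ with $P,L$ orthogonal and $D$ the diagonal matrix of singular values, uses the fact that conjugation by orthogonal matrices preserves $M_n^s$ to reduce the hypothesis (by contradiction) to the statement $D^{-1}XD\in M_n^s$ for all skew $X$, and then evaluates on the basis $E_{ij}-E_{ji}$ to force $D=I$. You instead transpose the hypothesis directly to get $(AA^T)X=X(AA^T)$ for all $X\in M_n^s$, show that a symmetric matrix commuting with all of $M_n^s$ is scalar, and finish with $\lambda^n=(\det A)^2=1$. The two arguments are cousins --- your $B=AA^T$ equals $PD^2P^T$ in the paper's notation, and both ultimately test against the same two-entry basis, where the paper's condition $d_j/d_i=d_i/d_j$ is the diagonal avatar of your commutation constraints --- but your packaging buys three things. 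First, it avoids invoking SVD, making the proof self-contained at the level of elementary matrix algebra. Second, you explicitly flag the $n=2$ case, where the commutant of $M_2^s$ is $\mathbb{R}I+\mathbb{R}J$ and the symmetry of $B$ is genuinely needed; the paper's basis computation happens to survive at $n=2$ but never remarks on it. Third, and most usefully, you make explicit where the hypothesis $\det A=1$ enters: the basis evaluation alone only yields $d_i^2=d_j^2$ for all $i,j$, i.e.\ $D=dI$ (equivalently your $B=\lambda I$), and it is the determinant that pins $d=1$. The paper asserts $D=I$ directly at this step, eliding the normalization; your proof repairs that small gap. One cosmetic remark: your determinant step uses only $(\det A)^2=1$, so your argument in fact proves the lemma under the weaker hypothesis $\det A=\pm 1$, which is consistent with how the lemma is applied in Theorem \ref{conjecture} (where only $\det(U^{-1}V)=1$ is available, but nothing would break with $-1$).
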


\begin{proof}
By the Singular Value Decomposition Theorem we have \begin{equation}
A=PDL,
\end{equation}	

\noindent where $P$ and $L$ are orthogonal matrices($P,L \in \mathcal{O}(n)$)and $D$ is  the diagonal matrix  of singular values of $A$.\ We will first prove that \begin{equation}
\label{diagonal} D^{-1}XD \in M_n^s \,\, \,  \forall X \in M_n^s.\end{equation}

Assume the contrary, that there exist a skew symmetric matrix $X$ such that \begin{equation}\label{diagonal1}
D^{-1}XD \notin M_n^s.
\end{equation}
It follows that  \begin{equation}\label{diagonal2}
D^{-1}P^TPXP^TPD \notin M_n^s,
\end{equation}
and consequently 

 \begin{equation}\label{diagonal3}
L^TD^{-1}P^TPXP^TPDL \notin M_n^s,
\end{equation}
which in turn means 

 \begin{equation}\label{diagonal4}
A^{-1}PXP^TA \notin M_n^s
\end{equation}
for the skew symmetric matrix $PXP^T.$ Thus we contradict the hypothesis. It follows that equation (\ref{diagonal}) is true. Applying equation(\ref{diagonal}) to the special frame of $\scriptstyle\binom{n}{2}$ linearly independent skew symmetric matrices that have only two non zero entries we conclude that $$D=  I.$$ The conclusion of the lemma follows.

\end{proof}
	
Now we will prove Theorem \ref{conjecture}. 

\begin{proof}

Let $U,V$ be two matrices that skew-symmetrize $\scriptstyle\binom{n}{2}$ linearly independent matrices $S_1,S_2,...,S_{\scriptstyle\binom{n}{2}}$. Let us denote by $S$ the $\scriptstyle\binom{n}{2}$ dimensional span of $S_1,S_2,...,S_{\scriptstyle\binom{n}{2}}$ and let us note that both $U$ and $V$ induce the following linear isomorphisms \begin{equation}\begin{split}
\overline{U}: S \rightarrow M_n^s\\ \overline{U}=U^{-1}XU \end{split} \end{equation} and \begin{equation}\begin{split}
\overline{V}: S \rightarrow M_n^s\\ \overline{V}=V^{-1}XV \end{split} \end{equation} These two mappings are clearly linear and injective and by reasons of dimension it follows that both are indeed linear isomorphisms. It follows that $$\overline{U}\circ\overline{V}^{-1}:M_n^s \rightarrow M_n^s$$ is a linear isomorphism defined by the equation
\begin{equation}
\overline{U}\circ \overline{V}^{-1}(X)=U^{-1}VXV^{-1}U
\end{equation}

Since $$\det{U^{-1}V}=1$$ and by applying Lemma \ref{conjlemma} we conclude that  $\exists O \in \mathcal{O}(n)$ such that $$U=VO.$$
\end{proof}

 Theorem \ref{conjecture} allows us verify weather a a connection with "{\it full rank}" curvature matrix is locally metric. First we will clarify what we mean by  "{\it full rank}" curvature.
	
\begin{defn}

Let $D$ be connection in the vector bundle $E.$ Let $$\sigma=(\sigma_1,\sigma_2,...,\sigma_m)$$ be a local frame and let $\Omega$ be the curvature matrix with respect to $\sigma.$ Let $S_{ij}$ be the matrices defined by the equation \begin{equation}
\label{fullrank}\Omega=\sum_{i<j} (\sigma^i \wedge \sigma^j)  S_{ij}
\end{equation}
where $\sigma^i$ is the dual of $\sigma_i.$ The number $r$ of linearly independent matrices $(S_{ij})_{1 \leq i<j \leq m}$ will be called the rank of $\Omega.$ If $$r=\scriptstyle\binom{n}{2}$$ then $\Omega$ is said to have "{\it full rank}".

\end{defn}

\noindent It is trivial to verify that the rank of the curvature is a pointwise, scalar invariant of the connection.\\

 Now consider  a connection $D$ in the vector bundle $E.$ We assume that the curvature has "{\it full rank}" on a open set $U.$ We'd like to verify if the connection is metric on $U.$ Based on Theorem \ref{conjecture} and Theorem 1.2 in (\cite{MC}) we have the following steps in deciding weather the connection $D$ is metric on $U$:

\begin{enumerate}[(a)]
	\item Take a local frame $\sigma'=(\sigma'_1, \sigma'_2,...,\sigma'_m )$ and calculate the curvature matrix $\Omega$ with respect to this frame.
	\item Determine the matrices $S_{ij}$ defined in equation (\ref{fullrank}).
	\item Form the system $$XS_{ij}+S_{ij}^TX=0.$$
	\item If the linear system from the previous step doesn't have a symmetric, positive definite solution $X=A$, the $D$ is not metric.
	
	\item If the solution $A,$ exists then take its square root $\sqrt{A}=B$.
	\item Form the frame $\sigma$ defined by $$\sigma=\sigma' B$$ and test whether the connection forms with respect to $\sigma$ are skew. If they are not then the connection is not metric on $U$.
	\item If the connection forms with respect to $f$ are skew, then the metric that makes $\sigma$ orthonormal is compatible with the connection.
	
\end{enumerate}

\end{document}